\newtheorem{theorem}{Theorem}[section]
\newtheorem{lemma}[theorem]{Lemma}
\theoremstyle{definition}
\newtheorem{remark}[theorem]{Remark} 
\newtheorem{example}[theorem]{Example} 
\newtheorem{question}[theorem]{Question}
\newtheoremstyle{cases}
  {12pt plus 6 pt}%       Space above
  {2pt}%       Space below
  {\bfseries}   %       Body font
  {}%          Indent amount (empty = no indent, \parindent = para indent)
  {\bfseries}% Thm head font
  {.}%         Punctuation after thm head
  {.5em}%      Space after thm head: " " = normal interword space;
\theoremstyle{cases}
\numberwithin{subcase}{case} \numberwithin{subsubcase}{subcase}
\numberwithin{equation}{subsection}
\begin{document}

\title[Bi-orderability of knot groups]{Testing bi-orderability of knot groups\footnotetext{2000 Mathematics Subject
Classification. Primary 57M25, 57M27, 06F15}}

\author[Adam Clay]{Adam Clay and Colin Desmarais and Patrick Naylor}
\thanks{Adam Clay was partially supported by NSERC grant RGPIN-2014-05465}
%\thanks{Colin Desmarais and Patrick Naylor were supported by a University of Manitoba startup grant}
\address{Department of Mathematics, 342 Machray Hall, University of Manitoba, Winnipeg, MB, R3T 2N2.}
\email{Adam.Clay@umanitoba.ca, naylorp@myumanitoba.ca, umdesmac@myumanitoba.ca}
\urladdr{http://server.math.umanitoba.ca/~claya/}

\begin{abstract}
We investigate the bi-orderability of two-bridge knot groups and the groups of knots with 12 or fewer crossings by applying recent theorems of Chiswell, Glass and Wilson \cite{CGW14}.  Amongst all knots with 12 or fewer crossings (of which there are 2977), previous theorems were only able to determine bi-orderability of 599 of the corresponding knot groups.  With our methods we are able to deal with 191 more.
\end{abstract}
\maketitle
\vspace{-.6cm}
\begin{center}
\today
\end{center}

\section{Introduction}
A group $G$ is called \textit{bi-orderable} if there exists a strict total ordering of the elements of $G$ such that $g<h$ implies $fg<fh$ and $gf<hf$ for all $f, g, h \in G$.

Several authors over the past few years have investigated the question of bi-orderability of $3$-manifold groups and knot groups \cite{PR03, PR06, LRR08, CR12, Ito13a, CGW14}.  With the exception of \cite{CGW14}, all the theorems developed apply only to $3$-manifolds, or specifically to knot complements, which fibre over $S^1$.  As a test of the effectiveness of available theorems, the authors of \cite{CR12} computationally investigated the knot groups for knots with twelve or fewer crossings appearing in the table available from \textit{KnotInfo} \cite{CL}.  They found that of of the 2977 knots with twelve or fewer crossings, 1246 of them fibre over $S^1$ and so the available theorems were applicable.  From these 1246, only 12 were found to have bi-orderable knot group,  and 487 were found to have non-bi-orderable knot group, which we could interpret as a success rate of about $\frac{12+487}{2977} \sim 17 \%$.

This paper extends the efforts of \cite{CR12} to the nonfibred case by using the newly available theorems of \cite{CGW14}.  The authors of  \cite{CGW14} have already observed that their methods can be used to show that the knot group of $5_2$ is not bi-orderable\footnote{Naylor and Rolfsen showed independently via a computer program that this group is not bi-orderable.  In fact, their program showed that the group admits generalized torsion \cite{NR14}.} ($5_2$ is a non-fibred knot), and they produce families of Alexander polynomials of degree four for which the corresponding knots will always have non-bi-orderable knot group.

As the theorems of \cite{CGW14} apply only to groups with two generators and one relator, we focus our attention on two sources of knot groups having these properties.  Our first family of such knot groups comes from two-bridge knots, whose groups are known to have a presentation with two generators and one relator of the form $\langle a, b | aw =wb \rangle$ (see Section \ref{twobridge_section}).  The second family is all knots from the the \textit{KnotInfo} table having twelve or fewer crossings, bridge number greater than two, and a group presentation (as computed by SnapPy \cite{SnapPy}) with two generators and one relator. 

With these techniques we were able to determine orderability of many two-bridge knot groups, all twist knot groups, and find 6 new bi-orderable knot groups and 185 new non-bi-orderable knot groups arising from knots with fewer than 12 crossings.  Despite these advances, at the time of this writing the first knot appearing in the tables for which bi-orderablity of the knot group cannot be determined is the knot $6_2$.  

The difficulty in the case of $6_2$ is that its Alexander polynomial has two real roots and two complex roots, and the available theorems are only applicable when all the roots of the Alexander polynomial are real or all the roots are complex.  This difficulty highlights a problem that is made explicit in \cite[Corollary 12]{NR14}, namely that the Alexander polynomial alone is not enough to detect non-bi-orderability of knot groups.  As such, one may not be able to address the case of $6_2$ by strengthening existing theorems, as they all depend on the Alexander polynomial.

\begin{question}
Can non-bi-orderability of the knot group be determined by examining knot invariants other than the Alexander polynomial?
\end{question}

\textbf{Acknowledgments}.  We would like to thank Nathan Dunfield for providing us with the a list of knot group presentations, as well as the code he used to compute them using SnapPy.

%There are knots, however, whose bridge number is greater than two yet their group admits a presentation with two generators and one relator.  For example, the $(p,q)$ torus knots have this property, as their knot groups are $\langle x, y | x^p = y^q \rangle$ and the bridge number of the $(p,q)$ torus knot is $\min \{ p, q\}$ (reference needed).  There are also hyperbolic examples of knots whose groups have a presentation with two generators and one relator, yet which are not two-bridge \cite{Johnson2006preprint}.  

\section{Background}

For convenience of the reader, we state the results of \cite{CGW14} that we will use here.  We write $b^a$ in place of $a^{-1}b a$, and for a word $w \in F(a, b)$ in the free group on generators $a$ and $b$, write $w_b$ and $w_a$ for the total exponent sum (which we will call the weight) of $b$ and $a$ in the word $w$.   Given such a word $w$, if $w_a =0$ then we can rewrite $w$ as
\[ w = b^{m_1 a^{d_1}} \cdots b^{m_r a^{d_r}}
\]
for some integers $m_i, d_i$ and $r \geq 1$.  For all $j \in \mathbb{Z}$, set $\tau_j (w) = \{ i : d_i =j \}$ and let $S_w = \{ j : \sum_{i \in \tau_j(w)} m_i \neq 0 \}$.  

We say that a word $w$ of the form above is \textit{tidy} if $\tau_j(w) = \emptyset$ for all $j$ satisfying either  $j > \max \{S_w \}$ or $j < \min \{ S_w \}$.  Set $\ell =  \max \{S_w \}$; we say that $w$ is \textit{principal} if it is tidy and $|\tau_{\ell}(w)| =1$.   In the case that $w$ is principal  and $\tau_{\ell}(w) = \{k\}$, we call $w$ \textit{monic} if in addition $m_k =1$.  Set  $s= \min \{ S_w \}$, when $\pi_1(S^3 \setminus K) = \langle a, b | w \rangle$ with $w$ as above, the Alexander polynomial has formula $\Delta_K(t) = \sum_{i=1}^r m_i t^{d_i - s}$.  We can group like powers and rewrite this as 
\begin{equation} \label{CGWeqn} \Delta_K(t) = \sum_{j \in \mathbb{Z}} \left( \sum_{i \in \tau_j(w)} m_i \right)t^{j-s} \tag{*}
\end{equation}
where we understand that the coefficient of $t^{j-s}$ is zero when $\tau_j(w) = \emptyset$.

\begin{theorem}[\cite{CGW14}, Corollary 2.5]
\label{CGW_theorem}
Let $K$ be a knot in $S^3$, and suppose that $\pi_1(S^3 \setminus K)$ has a presentation of the form $ \langle a, b | w \rangle$ where $w$ is tidy.  Let $\Delta_K(t)$ denote the Alexander polynomial of $K$.  Then:
\begin{enumerate}
\item If $\pi_1(S^3 \setminus K)$ is bi-orderable, then  $\Delta_K(t)$ has a positive real root.

\item If $w$ is monic and all the roots of  $\Delta_K(t)$ are real and positive, then $\pi_1(S^3 \setminus K)$ is bi-orderable.

\item If $w$ is principal,  $\Delta_K(t) = a_0 + \cdots + a_{d-1}t^{d-1} - m t^d$ where $\gcd \{ a_0, \ldots, a_{d-1} \} = 1$ and $a_{d-1}$ is not divisible by $m$, and all the roots of $\Delta_K(t)$ are real and positive, then $\pi_1(S^3 \setminus K)$ is bi-orderable.
\end{enumerate}
\end{theorem}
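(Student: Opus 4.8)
The plan is to exploit the structure of $G=\pi_1(S^3\setminus K)=\langle a,b\mid w\rangle$ as an extension of $\mathbb{Z}$ by the normal closure $N$ of $b$. Since $w_a=0$, the word $w$ lies in $N$, so killing $b$ collapses $w$ and gives a short exact sequence $1\to N\to G\to\mathbb{Z}\to 1$ with the quotient generated by the image $t$ of $a$; as $\mathbb{Z}$ is free this splits, and conjugation by $a$ is an automorphism $\phi$ of $N$ with $b_i:=a^{-i}ba^i\mapsto b_{i+1}$. The rational abelianization $H_1(N;\mathbb{Q})=N^{\mathrm{ab}}\otimes\mathbb{Q}$ is the rational Alexander module, a $\mathbb{Q}$-vector space of dimension $\deg\Delta_K$ on which $t=\phi_*$ acts with characteristic polynomial $\Delta_K$ up to a unit. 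The whole argument rests on translating between order-preserving automorphisms and positivity of roots, via two halves of a single linear-algebra lemma: (i) if a torsion-free abelian group or $\mathbb{Q}^n$ carries a bi-ordering preserved by an automorphism $M$, then $M$ has a positive real eigenvalue; and (ii) if $M\in GL_n(\mathbb{R})$ has all eigenvalues real and positive, then there is an $M$-invariant bi-ordering, built lexicographically from a full flag of $M$-invariant subspaces on whose one-dimensional quotients $M$ acts by a positive real scalar.

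For part (1), I would restrict the bi-ordering of $G$ to $N$ and push it down. A bi-ordering of $N$ descends to a translation-invariant total order on a nonzero $t$-equivariant quotient $V$ of $H_1(N;\mathbb{Q})$, the equivariance coming from the fact that conjugation by $a$ preserves the bi-ordering of $G$. Then (i) forces $t|_V$ to have a positive real eigenvalue, which is among the roots of $\Delta_K$ (the characteristic polynomial of $t$ on the full module), so $\Delta_K$ has a positive real root. The step needing care is the descent: in the nonfibred case $N$ is infinitely generated and $N^{\mathrm{ab}}$ may have $\mathbb{Z}$-torsion, so I would argue on the rational module after passing to the appropriate torsion-free $t$-invariant quotient to keep the eigenvalue extraction valid.

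For parts (2) and (3) I would build the ordering from the ground up using the HNN description of $G$. The Moldavanskii decomposition presents $G$ as an HNN extension of the window group $B=\langle b_s,\dots,b_\ell\mid w\rangle$, stable letter $a$ conjugating $\langle b_s,\dots,b_{\ell-1}\rangle$ onto $\langle b_{s+1},\dots,b_\ell\rangle$. When $w$ is monic the unique top syllable $b_\ell^{\,1}$ lets me Tietze-eliminate $b_\ell$, so $B$ is free of rank $\ell-s$ and $G$ is the ascending HNN extension (mapping torus) of an injective endomorphism $\phi$ of $B$, with $\phi_*$ on $B^{\mathrm{ab}}=\mathbb{Z}^{\ell-s}$ having characteristic polynomial $\Delta_K$. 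Assuming all roots of $\Delta_K$ are real and positive, (ii) gives a $\phi_*$-invariant order of $B^{\mathrm{ab}}$; I would lift this up the lower central series of $B$ (each $\gamma_k/\gamma_{k+1}$ is free abelian with $\phi$ acting by eigenvalues that are $k$-fold products of the positive reals above, hence again orderable compatibly), obtaining a $\phi$-invariant bi-ordering of the residually torsion-free nilpotent group $B$ and hence of $N$. Finally the lexicographic order on $N\rtimes\mathbb{Z}$, first the $\mathbb{Z}$-coordinate and then the $\phi$-invariant order on $N$, is bi-invariant precisely because $\phi$ preserves the order, proving $G$ bi-orderable.

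Part (3) is where I expect the real difficulty, and it is the main obstacle of the whole argument. When $w$ is only principal the top syllable is $b_\ell^{\,m}$ with $|m|>1$, so $b_\ell$ cannot be eliminated, $B$ is no longer free, and the integral Alexander module fails to be finitely generated over $\mathbb{Z}$ (the leading coefficient forces inverting $m$). The hypotheses $\gcd\{a_0,\dots,a_{d-1}\}=1$ and $m\nmid a_{d-1}$ are there precisely to keep the relevant abelian sections torsion-free and the module cyclic, so that the order produced by (ii) on $\mathbb{Z}[t^{\pm1}]/(\Delta_K)$ is a genuine bi-ordering and survives the lift to $N$. Concretely I would replace ``$B$ is free'' by ``$N$ is residually torsion-free nilpotent with torsion-free graded pieces,'' verify using the gcd and non-divisibility conditions that $t$ acts on each piece with positive real spectrum and no unexpected torsion, and then rerun the lower-central-series lift and the semidirect-product ordering. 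Checking that these number-theoretic conditions are exactly what rules out the torsion obstructions, rather than merely being sufficient for the flag construction, is the step I would budget the most time for.
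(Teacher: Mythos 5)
A preliminary point of comparison: this paper contains no proof of the statement at all. The theorem is quoted, as background, directly from Chiswell--Glass--Wilson \cite{CGW14} (their Corollary 2.5), and the paper's own contributions (Lemma \ref{twobridge_lemma}, Theorems \ref{two_bridge_theorem} and \ref{twist_theorem}, and the computations in the appendices) consist of manufacturing presentations that satisfy its hypotheses so the result can be applied as a black box. So your proposal cannot be measured against an argument in this paper; it is an attempted reconstruction of the proof in \cite{CGW14}. Your overall architecture does follow the established lineage that \cite{CGW14} generalizes from the fibered case (Perron--Rolfsen \cite{PR03}, Clay--Rolfsen \cite{CR12}): restrict a bi-ordering to the kernel $N$ of $G \to \mathbb{Z}$ and extract a positive real eigenvalue for necessity; use the Magnus--Moldavanskii decomposition, lift an invariant order through the lower central series of the free base group, and order $N \rtimes \mathbb{Z}$ lexicographically for sufficiency.

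As a proof, however, the proposal has genuine gaps, concentrated exactly where the hypotheses of the theorem must do work. In part (1), the eigenvalue lemma you invoke is Clay--Rolfsen's theorem, whose proof (maximal proper convex subgroup, H\"older's theorem) requires the bi-ordered group to be finitely generated; for a non-fibered knot $N$ is infinitely generated, a bi-ordering of $N$ does not descend to $N^{\mathrm{ab}}$ or to any quotient in a canonical way, and your proposed fix --- ``pass to the appropriate torsion-free $t$-invariant quotient'' --- names the missing construction rather than supplying it. A telling symptom: your argument for (1) never uses tidiness. Since $H_1(N;\mathbb{Q})$ is the rational Alexander module for \emph{any} knot, a complete proof along your lines would show that every knot with bi-orderable group has an Alexander polynomial with a positive real root, with no hypothesis on the presentation whatsoever --- which is precisely the open question that these theorems only answer in special cases. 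In \cite{CGW14} tidiness is what ties $\deg \Delta_K$ to the finitely generated base group $B$ of the decomposition, where an eigenvalue can be extracted; and even there, conjugation by $a$ is only an isomorphism between proper subgroups of $B$ (an automorphism only in the fibered case), so the finitely-generated lemma still does not apply off the shelf. In part (3), by your own account the verification that $\gcd\{a_0,\dots,a_{d-1}\}=1$ and $m \nmid a_{d-1}$ kill the torsion obstructions is ``the step I would budget the most time for''; that verification \emph{is} the content of part (3), so this part is a plan, not a proof. Part (2) is the soundest portion: modulo the standard free Lie algebra facts (eigenvalues on $\gamma_k/\gamma_{k+1} \otimes \mathbb{Q}$ are $k$-fold products of those on $B^{\mathrm{ab}} \otimes \mathbb{Q}$) and the observation that an order-preserving injective endomorphism suffices to order the direct limit $N$ and then $G$, it is essentially Perron--Rolfsen's argument and would go through.
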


\section{Two bridge knots}
\label{twobridge_section}
Recall that according to Schubert, $2$-bridge knots are classified by coprime pairs of odd integers $p$ and $q$, with $0<p<q$.  Thus every two-bridge knot may be written as $K_{\frac{p}{q}}$ where $\frac{p}{q}$ is a reduced fraction.  The fundamental group has presentation
\[ \pi_1(S^3 \setminus K_{\frac{p}{q}}) = \langle a, b | aw=wb  \rangle  
\]
where $w= b^{\epsilon_1}a^{\epsilon_2} \cdots b^{\epsilon_{q-2}} a^{\epsilon_{q-1}}$ and $\epsilon_i = (-1)^{\lfloor \frac{ip}{q} \rfloor}$.  This formula follows from Schubert's normal form \cite{Schubert56}, see e.g. the discussion in \cite{Murasugi61}. 

\begin{lemma}
\label{q_lemma}
If $\Delta_{K_{\frac{q}{p}}}(t) =a_0 + a_1 t + \cdots + a_nt^n$, then $\Delta_{K_{\frac{q}{p}}}(-1)= \sum_{i=1}^n |a_i| =q$.
\end{lemma}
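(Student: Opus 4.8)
The plan is to reduce the statement to a single parity observation about the partial sums of the exponents $\epsilon_i$. First I would record the closed form for the Alexander polynomial coming from the presentation $\langle a,b\mid aw=wb\rangle$. Writing $\eta_i=\epsilon_1+\cdots+\epsilon_i$ for $0\le i\le q-1$ (so that $\eta_0=0$), a Fox-calculus computation on the relator $R=awb^{-1}w^{-1}$, abelianized via $a,b\mapsto t$ and divided by $t-1$ (equivalently, an application of formula~\eqref{CGWeqn}), yields, up to a unit $\pm t^m$,
\[ \Delta_{K_{\frac{p}{q}}}(t)\ \doteq\ \sum_{i=0}^{q-1}(-1)^i\,t^{\eta_i}. \]
This is the form I would take as the starting point: it exhibits $\Delta$ as a sum of exactly $q$ signed monomials indexed by the partial sums $\eta_0,\ldots,\eta_{q-1}$.

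The engine of the argument is that each $\epsilon_i\in\{+1,-1\}$ is odd, so every partial sum satisfies $\eta_i\equiv i\pmod 2$. Two consequences follow at once. Evaluating at $t=-1$, each monomial contributes $(-1)^i(-1)^{\eta_i}=(-1)^{i+\eta_i}=1$, so $\Delta_{K_{\frac{p}{q}}}(-1)=\sum_{i=0}^{q-1}1=q$. Moreover, when I collect the monomials by their power of $t$, every index $i$ contributing to a fixed power $t^j$ satisfies $\eta_i=j$ and hence $i\equiv j\pmod 2$, so the signs $(-1)^i$ all agree with $(-1)^j$. No cancellation occurs: the coefficient is $a_j=(-1)^j\,\#\{\,i:\eta_i=j\,\}$, whence $|a_j|=\#\{\,i:\eta_i=j\,\}$ and $\sum_j|a_j|=\#\{\,0\le i\le q-1\,\}=q$. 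The same no-cancellation fact gives $a_j(-1)^j=|a_j|$, so that $\Delta_{K_{\frac{p}{q}}}(-1)=\sum_j a_j(-1)^j=\sum_j|a_j|$, completing the chain $\Delta_{K_{\frac{p}{q}}}(-1)=\sum_j|a_j|=q$.

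The genuine obstacle is justifying the closed form of the first paragraph, and in particular that it contains \emph{exactly} $q$ terms indexed by $\eta_0,\ldots,\eta_{q-1}$; the parity bookkeeping above is then immediate. I would obtain the formula by tracking how each syllable $b^{\epsilon_i}$ or $a^{\epsilon_i}$ of $w$ contributes a term $\pm t^{\eta_i}$ in the Fox derivative, which requires some care with the meridian conjugation but is otherwise mechanical; the normalization ambiguity $\pm t^m$ is harmless, since it affects neither $\sum_j|a_j|$ nor $|\Delta(-1)|$ (see \cite{Murasugi61}). As an independent check on the value $q$, one may note that $|\Delta(-1)|$ is the determinant of $K_{\frac{p}{q}}$, equal to the order of $H_1$ of the double branched cover $L(q,p)$, namely $q$; and the alternating-sign pattern forced by $\eta_i\equiv i\pmod 2$ is precisely the statement that the two-bridge (hence alternating) knot $K_{\frac{p}{q}}$ has an alternating Alexander polynomial, for which $\sum_j|a_j|=|\Delta(-1)|$.
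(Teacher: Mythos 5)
Your proof is correct and follows essentially the same route as the paper: both start from the Fox-calculus formula $\Delta_{K_{\frac{p}{q}}}(t) = \sum_{i=0}^{q-1}(-1)^i t^{\epsilon_1+\cdots+\epsilon_i}$ and exploit the parity observation that $\epsilon_1+\cdots+\epsilon_i \equiv i \pmod 2$ to conclude both that no cancellation occurs when collecting like powers and that each of the $q$ monomials contributes $+1$ at $t=-1$. Your explicit treatment of the unit normalization $\pm t^m$ is a small point of added care beyond the paper's write-up, but the underlying argument is the same.
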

\begin{proof}
From the group presentation above, one can use Fox calculus to compute (see \cite{HJ13}):
\[ \Delta_{K_{\frac{p}{q}}}(t) = 1 - t^{\epsilon_1} + t^{\epsilon_1 + \epsilon_2} - t^{\epsilon_1 + \epsilon_2 + \epsilon_2} + \cdots + t^{\epsilon_1 + \cdots + \epsilon_{q-1}}
\]
Since $\epsilon_i = \pm 1$, $\sum_{i=1}^{\ell} \epsilon_i$ is odd if and only if $\ell$ is odd.  From this we draw two conclusions: First, if we regroup terms and write $\Delta_{K_{\frac{q}{p}}}(t) =a_0 + a_1 t + \cdots + a_nt^n$, then $a_i$ is negative if and only if $i$ is odd.  Therefore $\Delta_{K_{\frac{q}{p}}}(-1)= \sum_{i=1}^n |a_i|$.  Second, we may compute from the above formula that 
 \[  \Delta_{K_{\frac{p}{q}}}(-1) = \underbrace{1+1+ \cdots +1 }_{\text{~$q$ times}} = q.
\]
\end{proof}

\begin{lemma}
\label{twobridge_lemma}
Every two bridge knot group admits a presentation of the form $\langle x, y | r \rangle$ where the relator $r$ is a tidy word in the generators $x,y$.
\end{lemma}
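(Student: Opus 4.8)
The plan is first to change generators so that the relator acquires trivial exponent sum in one variable. The relator $awb^{-1}w^{-1}$ from Schubert's presentation has total $a$-exponent $1$, so the definition of tidy does not yet apply to it. Both meridians $a,b$ map to the generator of $H_1(S^3\setminus K)\cong\mathbb{Z}$, so I would set $x=a$ and $y=a^{-1}b$, giving $b=xy$ and a presentation $\langle x,y\mid r\rangle$ in which $r$ is the image of $awb^{-1}w^{-1}$ under $a\mapsto x,\ b\mapsto xy$. Abelianizing (where $x\mapsto 1$ and $y\mapsto 0$) shows $r_x=0$, so $r$ may be written in the form $y^{m_1x^{d_1}}\cdots y^{m_Nx^{d_N}}$ and asking whether it is tidy is meaningful.

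The engine is the following bookkeeping device. Under $a\mapsto x,\ b\mapsto xy,\ b^{-1}\mapsto y^{-1}x^{-1}$, the accumulated $x$-exponent after the image of a prefix of $awb^{-1}w^{-1}$ equals the combined $a$- and $b$-exponent sum of that prefix, i.e. the homological height. Hence every $y^{\pm1}$ in $r$ arises from some $b^{\pm1}$ in $awb^{-1}w^{-1}$, sitting at a level equal to this height (a $b$ contributes $y$ at its post-height, a $b^{-1}$ contributes $y^{-1}$ at its pre-height); the associated exponent $d_i$ is the negative of this level, and since the tidy condition is invariant under $d_i\mapsto-d_i$ I may argue with heights throughout. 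Writing $\sigma_k=\epsilon_1+\cdots+\epsilon_k$ (with $\sigma_0=0$) for the partial sums of $w$, the heights are $1+\sigma_k$ along $w$, the value $1+\sigma_{q-1}$ at the explicit $b^{-1}$, and $\sigma_k$ along $w^{-1}$, and I would tabulate the exact level of each $y^{\pm1}$ in these terms.

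It then suffices to prove the stronger statement that every level lies in the interval $[\,1+\min_k\sigma_k,\ 1+\max_k\sigma_k\,]$. The levels coming from $w$ and from the explicit $b^{-1}$ have the form $1+\sigma_k$ and so lie in this interval at once. The subtle contributions — and what I expect to be the crux — are those from $w^{-1}$, whose heights have the form $\sigma_k$ and could a priori drop below $1+\min_k\sigma_k$. The key observation is that each $y^{\pm1}$ coming from $w^{-1}$ sits exactly one unit above the smaller of its two neighbouring heights: if $\epsilon_i=+1$ the relevant letter is $b^{-1}$, placing $y^{-1}$ at height $\sigma_i=\sigma_{i-1}+1$, while if $\epsilon_i=-1$ the letter is $b$, placing $y$ at height $\sigma_{i-1}=\sigma_i+1$. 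In either case the level is at least $1+\min_k\sigma_k$, and it is trivially at most $\max_k\sigma_k\le 1+\max_k\sigma_k$.

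Finally I must check that the endpoints of this interval are genuinely occupied with nonzero net weight, so that $\max S_r=1+\max_k\sigma_k$ and $\min S_r=1+\min_k\sigma_k$. By \eqref{CGWeqn} the net weight $\sum_{i\in\tau_j(r)}m_i$ at a level $j$ is, up to the shift $s$ and an overall sign, a coefficient of $\Delta_K(t)$; and the parity computation in the proof of Lemma~\ref{q_lemma} — that all indices $k$ realizing a given value $\sigma_k$ share its parity, so the terms $(-1)^kt^{\sigma_k}$ cannot cancel at an extreme value of $\sigma_k$ — shows that the coefficients of $\Delta_K$ at the two ends of its support are nonzero. Thus the levels $1+\max_k\sigma_k$ and $1+\min_k\sigma_k$ lie in $S_r$, and combining this with the previous paragraph yields $\tau_j(r)=\emptyset$ for every $j$ outside $[\min S_r,\max S_r]$. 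Hence $r$ is tidy.
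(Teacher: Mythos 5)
Your proof is correct, but it follows a genuinely different route from the paper's. The change of generators is only cosmetically different (you use $a\mapsto x$, $b\mapsto xy$, the paper uses $\phi(a)=xy$, $\phi(b)=y$, mirroring which generator gets weight zero); the substantive difference is how tidiness is forced. The paper's argument is a global count with no geometry: each occurrence of $a^{\pm 1}$ in $awb^{-1}w^{-1}$ contributes exactly one conjugated letter to the new relator, so $\sum_i |m_i| = q$, while Lemma \ref{q_lemma} says the absolute values of the coefficients of $\Delta_K$ also sum to $q$; by \eqref{CGWeqn} this leaves no room for cancellation at \emph{any} level, since a nonempty $\tau_{j_0}$ with zero net weight would give the strict chain $q=\sum_{j\neq j_0}\bigl|\sum_{i\in\tau_j}m_i\bigr|\leq\sum_{i\notin\tau_{j_0}}|m_i|<\sum_i|m_i|=q$. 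That argument proves something stronger than tidiness: every nonempty level lies in $S_R$. Your argument is instead local and geometric: you track the homological height of each letter to confine all occupied levels to the interval $[1+\min_k\sigma_k,\,1+\max_k\sigma_k]$, and then pin the endpoints into $S_r$ using only the parity observation inside the proof of Lemma \ref{q_lemma} (non-cancellation at the extreme exponents of $\Delta_K$) together with \eqref{CGWeqn}. What your route buys is sharper positional information — it identifies $\min S_r$ and $\max S_r$ exactly as $1+\min_k\sigma_k$ and $1+\max_k\sigma_k$, hence the span of $S_r$ as the breadth of the walk $(\sigma_k)$, and it uses only endpoint non-cancellation rather than the full equality $\Delta_K(-1)=q$; what the paper's route buys is brevity and the stronger no-internal-cancellation conclusion. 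One presentational point: your last step is compressed — the passage from ``the extreme coefficients of $\Delta_K$ are nonzero'' to ``$1+\min_k\sigma_k,\ 1+\max_k\sigma_k\in S_r$'' rests on the fact that, by \eqref{CGWeqn}, the span $\max S_r - \min S_r$ equals the width of the support of $\Delta_K$, namely $\max_k\sigma_k-\min_k\sigma_k$, while $S_r$ sits inside an interval of exactly that width; this matching-of-widths step deserves to be written out explicitly.
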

\begin{proof}
Consider an arbitrary two bridge knot $K_{\frac{p}{q}}$, and the presentation of its knot group with notation as defined above
\[ \pi_1(S^3 \setminus K_{\frac{q}{p}}) = \langle a, b | aw=wb  \rangle  
\]
Define a homomorphism $\phi: F(a,b) \rightarrow F(x,y)$  by $\phi(a) =xy$, $\phi(b) =y$.  This descends to an isomorphism of the group  $ \langle a, b | aw=wb  \rangle  $ with the group presented by $\langle x, y | R \rangle$ where $R = xy \phi(w) y^{-1} \phi(w)^{-1}$, note that $y$ has weight zero in $R$.  We claim that $R$ is a tidy word in $\{x, y \}$.

Observe that that since each occurence of either $a$ or $a^{-1}$ in the word $awb^{-1}w^{-1}$ results in exactly one occurence of $x$ or $x^{-1}$ in $R$, the letters $x$ and $x^{-1}$ occur a total of $q$ times in $R$. Thus upon rewriting $R$ in the form
\[ x^{m_1 y^{d_1}} \cdots x^{m_r y^{d_r}}
\]
we have $\sum_{i=1}^r |m_i| =q$, since the rewriting is accomplished without cancelling any powers of $x$.

Now suppose that $R$ is not tidy, so there exists $j_0$ such that $\tau_{j_0}(R)  \neq \emptyset$ and $j_0 \notin S_R$, and therefore $\sum_{i \in \tau_{j_0}(w)} m_i  =0$.  
%Note that the coefficient $\sum_{i \in \tau_{j_0}(w)} m_i $ appearing in the formula (\ref{CGWeqn}) is zero, so we compute:
%\[ q = \sum_{j \in \mathbb{Z}} \left| \sum_{i \in \tau_j(w)} m_i \right| =\sum_{j \neq j_0} \left| \sum_{i \in \tau_{j_0}(R)} m_i \right| \leq \sum_{i \notin \tau_{j_0}(R)} |m_i| < \sum_{i=1}^r |m_i| =q
%\]
%and this contradiction completes the proof.
We compute
\[
\begin{array}{rcll}
 q & = &\displaystyle  \sum_{j \in \mathbb{Z}} \left| \sum_{i \in \tau_j(w)} m_i \right| & \mbox{by Lemma \ref{q_lemma} and (\ref{CGWeqn})} \\
 & = &\displaystyle  \sum_{j \neq j_0} \left| \sum_{i \in \tau_{j}(R)} m_i \right| &  \mbox{since $\sum_{i \in \tau_{j_0}(w)} m_i  =0$} \\
 & \leq & \displaystyle \sum_{
\substack{i \in \mathbb{Z}\\
i \notin \tau_{j_0}(R)}} |m_i| & \\
 &  < &\displaystyle  \sum_{i=1}^r |m_i| &  \hspace{15em} \\
& = & q
\end{array}
\]
and this contradiction completes the proof.
\end{proof}

As an immediate consequence, we can apply Theorem \ref{CGW_theorem}(1) to all two-bridge knot groups.

\begin{theorem}
\label{two_bridge_theorem}
Suppose that $K$ is a two-bridge knot with Alexander polynomial $\Delta_K(t)$.  If $\pi_1(S^3 \setminus K)$ is bi-orderable, then $\Delta_K(t)$ has a positive real root.
\end{theorem}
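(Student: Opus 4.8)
The plan is to obtain this statement by combining the two ingredients already in place: Lemma \ref{twobridge_lemma} and part (1) of Theorem \ref{CGW_theorem}. First I would note exactly what Theorem \ref{CGW_theorem}(1) demands as input. Its only hypothesis is that the knot group admit a presentation of the form $\langle a, b \mid w \rangle$ in which the single relator $w$ is tidy; under that hypothesis alone, it concludes that bi-orderability of $\pi_1(S^3 \setminus K)$ forces $\Delta_K(t)$ to have a positive real root. So the whole task reduces to checking that an arbitrary two-bridge knot group satisfies this single structural requirement.

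Second, I would invoke Lemma \ref{twobridge_lemma}, which supplies precisely that requirement: for any two-bridge knot $K = K_{p/q}$, the group $\pi_1(S^3 \setminus K)$ admits a presentation $\langle x, y \mid R \rangle$ with $R$ tidy in the generators $x, y$, arising from the substitution $\phi(a) = xy$, $\phi(b) = y$. Applying Theorem \ref{CGW_theorem}(1) to this very presentation then yields: if $\pi_1(S^3 \setminus K)$ is bi-orderable, then $\Delta_K(t)$ has a positive real root. That is the desired conclusion, so the argument is complete in essentially one line.

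There is no genuine obstacle remaining at this stage, since all of the real work has been front-loaded into Lemma \ref{twobridge_lemma}, where the nontrivial step is the weight estimate comparing the count $\sum_i |m_i| = q$ of occurrences of $x^{\pm 1}$ against the value $\Delta_K(-1) = q$ furnished by Lemma \ref{q_lemma}. The one point I would take care to confirm before declaring the proof immediate is that the $\Delta_K(t)$ appearing in Theorem \ref{CGW_theorem}(1), when the theorem is applied to the presentation $\langle x, y \mid R \rangle$, is indeed the Alexander polynomial of the same knot $K$. This holds because $\phi$ induces an isomorphism of the two groups, and because the existence of a positive real root is unaffected by the unit ambiguity $\pm t^k$ inherent in the Alexander polynomial (multiplication by $t^k$ does not create or destroy roots in the open ray $t > 0$). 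With that observation in hand, the statement follows directly.
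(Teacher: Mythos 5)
Your proposal matches the paper exactly: the paper states Theorem \ref{two_bridge_theorem} as an immediate consequence of Lemma \ref{twobridge_lemma} (which provides the tidy presentation) combined with Theorem \ref{CGW_theorem}(1), with no further argument needed. Your extra remark that the unit ambiguity $\pm t^k$ of the Alexander polynomial does not affect the existence of positive real roots is a sensible precaution, though the paper treats this as implicit.
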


\section{Twist knots}

Twist knots are a subfamily of two-bridge knots whose diagrams appears as in Figure \ref{twist_figure}. Considered as a two-bridge knot $K_{\frac{p}{q}}$, the twist knot with $\frac{q-1}{2}$ twists ($q$ odd) corresponds to the case $p=q-2$.   We simplify notation by writing $K_q$ for the twist knot with $\frac{q-1}{2}$ twists, instead of $K_{\frac{q-2}{q}}$.
\vspace{1ex}
\begin{figure}[h!]
\setlength{\unitlength}{0.8cm}
\begin{picture}(3,2)(1,0)
\put(0,0){
\includegraphics[scale=0.35]{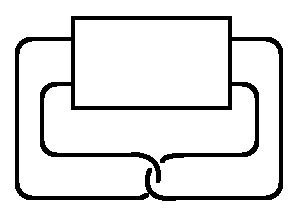}}
\put(1.55,2.2){\small{$\frac{q-1}{2}$ twists}}
%\put(-1,1.5){$K_q=$}
\end{picture}
\caption{The twist knot $K_q$}
\label{twist_figure}
\end{figure}
\begin{theorem}
\label{twist_theorem}
Let $q > 3$ be an odd integer.  If $q  \equiv 1 \pmod{4}$ then $\pi_1(S^3 \setminus K_q)$ is bi-orderable, and if  $q  \equiv 3 \pmod{4}$ then $\pi_1(S^3 \setminus K_q)$ is not bi-orderable.
\end{theorem}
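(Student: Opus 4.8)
The plan is to compute the Alexander polynomial $\Delta_{K_q}(t)$ explicitly, note that it has degree two, and read off whether it has a positive real root as a function of $q \bmod 4$. Since Lemma \ref{twobridge_lemma} guarantees that $K_q$ has a tidy presentation, all three parts of Theorem \ref{CGW_theorem} are available. First I would specialize the formula from the proof of Lemma \ref{q_lemma} to $p = q-2$. Writing $m = (q-1)/2$, one checks that $\epsilon_i = (-1)^{i-1}$ for $1 \le i \le m$ and $\epsilon_i = (-1)^{i}$ for $m < i \le q-1$, so the partial sums $s_\ell = \epsilon_1 + \cdots + \epsilon_\ell$ take values in $\{-1,0,1\}$ when $m$ is even and in $\{0,1,2\}$ when $m$ is odd. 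Collecting like powers in $\Delta_{K_q}(t) = \sum_{\ell=0}^{q-1}(-1)^\ell t^{s_\ell}$ then gives, up to normalization by $\pm t^k$,
\[
\Delta_{K_q}(t) = \begin{cases} n t^2 - (2n+1)t + n, & q \equiv 1 \pmod 4,\ n = \tfrac{q-1}{4},\\ n t^2 - (2n-1)t + n, & q \equiv 3 \pmod 4,\ n = \tfrac{q+1}{4}.\end{cases}
\]
As a consistency check, the absolute values of the coefficients sum to $q$ in both cases, as required by Lemma \ref{q_lemma}.

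The case $q \equiv 3 \pmod 4$ is then immediate: the discriminant of $n t^2 - (2n-1)t + n$ is $(2n-1)^2 - 4n^2 = 1 - 4n < 0$, so $\Delta_{K_q}(t)$ has no real root, and the contrapositive of Theorem \ref{CGW_theorem}(1) forces $\pi_1(S^3 \setminus K_q)$ to be non-bi-orderable. For $q \equiv 1 \pmod 4$ the discriminant is instead $(2n+1)^2 - 4n^2 = 4n+1 > 0$, and since the sum $(2n+1)/n$ and product $1$ of the roots are both positive, both roots are real and positive.

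To upgrade this to bi-orderability I would invoke Theorem \ref{CGW_theorem}(2) or (3). When $q = 5$ (so $n = 1$) the polynomial is the monic $t^2 - 3t + 1$, and part (2) applies directly. For $q \ge 9$ (so $n \ge 2$) I would use part (3): reading the polynomial as $a_0 + a_1 t - m t^2$ with $a_0 = -n$, $a_1 = 2n+1$ and $m = n$, we have $\gcd\{-n,\,2n+1\} = \gcd\{n,1\} = 1$, while $a_1 = 2n+1 \equiv 1 \pmod{n}$ is not divisible by $m = n$; these are precisely the arithmetic hypotheses of part (3), and the root condition was just verified.

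The hard part is the remaining \emph{structural} input, since parts (2) and (3) require the relator to be monic, respectively principal, not merely tidy. Writing the tidy relator $R$ from Lemma \ref{twobridge_lemma} in the normal form $x^{m_1 y^{d_1}} \cdots x^{m_r y^{d_r}}$, one knows $\sum_i |m_i| = q$, and comparing with (\ref{CGWeqn}) the three nonzero coefficient-sums are $\pm n,\ \mp(2n+1),\ \pm n$; because these absolute values already total $q = \sum_i |m_i|$, there is no cancellation inside any single $\tau_j(R)$. What still must be shown is that the top $y$-height $\ell = \max S_R$ is carried by a \emph{single} block, i.e. $|\tau_\ell(R)| = 1$ with $m_k = \mp n$ (and $m_k = 1$ when $n=1$), which is exactly principality (monicity). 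Establishing $|\tau_\ell(R)| = 1$ uniformly in $q$ by tracking the explicit word $R = xy\,\phi(w)\,y^{-1}\phi(w)^{-1}$ for the twist family — together with the minor bookkeeping of replacing the presentation by an equivalent one (inverting the relator or exchanging the roles of $a$ and $b$) should the natural representative carry the wrong sign of $m_k$ — is where the real work lies; the discriminant analysis above is then routine.
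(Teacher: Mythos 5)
Your overall strategy is the same as the paper's: compute the degree-two Alexander polynomial, use Theorem \ref{two_bridge_theorem} (i.e.\ Theorem \ref{CGW_theorem}(1) via Lemma \ref{twobridge_lemma}) to kill the case $q \equiv 3 \pmod 4$, and use Theorem \ref{CGW_theorem}(2) for $q=5$ and Theorem \ref{CGW_theorem}(3) for $q \geq 9$ in the case $q \equiv 1 \pmod 4$. Your polynomial computation, discriminant analysis, and gcd/divisibility checks all agree with the paper (your polynomials are the paper's up to a unit, which is harmless), and the $q \equiv 3 \pmod 4$ half of your argument is complete.

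However, there is a genuine gap in the $q \equiv 1 \pmod 4$ half, and you have named it yourself: Theorem \ref{CGW_theorem}(2) and (3) require the relator to be monic, respectively principal, and you never establish this --- you only describe it as ``where the real work lies.'' This is not a routine detail that can be waved at; it is the actual content of this half of the theorem, and your partial substitute does not close it. Your observation that tidiness plus $\sum_i |m_i| = q$ forces all $m_i$ within each $\tau_j(R)$ to have the same sign is correct, but it cannot yield $|\tau_\ell(R)|=1$: the top coefficient $\pm n$ could a priori be spread over several same-signed blocks at the top height, which would make the relator tidy but not principal. The paper closes this gap by explicit computation for the twist family: from $\epsilon_i = (-1)^{\lfloor i(q-2)/q \rfloor}$ one finds $w = (ba^{-1}\cdots b)(a^{-1}b^{-1})(ab^{-1}\cdots a)$, whence $\phi(w) = x^{-\frac{q-1}{4}}y^{-1}x^{\frac{q-1}{4}}y$ collapses to just four blocks, and the conjugated relator
\[
y^{-1}Ry = (y^{-1}xy)\bigl(x^{-\frac{q-1}{4}}\bigr)\bigl(y^{-1}x^{\frac{q-1}{4}}y\bigr)\bigl(y^{-2}x^{-\frac{q-1}{4}}y^2\bigr)\bigl(y^{-1}x^{\frac{q-1}{4}}y\bigr)
\]
has $\tau_0 = \{2\}$, $\tau_1 = \{1,3,5\}$, $\tau_2 = \{4\}$, so $|\tau_2| = 1$ and the relator is principal, with top exponent of magnitude $\frac{q-1}{4}$, hence monic exactly when $q = 5$. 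This five-block computation (plus the sign bookkeeping you correctly anticipated) is short but indispensable; without it your proposal proves only the non-bi-orderability statement, not the bi-orderability one.
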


\begin{proof} 

The Alexander polynomial of the twist knot with $\frac{q-1}{2}$ twists is \cite[p. 80]{SN99}:
% (reference needed, it's best not to compute this from scratch even though we can).
\[
    \Delta(t) = 
\begin{cases}
    \frac{1-q}{4} + \left(\frac{q+1}{2} \right) t + \left( \frac{1-q}{4} \right) t^2& \text{if } q \equiv 1 \pmod{4} \\
    \frac{1+q}{4} + \left(\frac{1-q}{2} \right) t + \left( \frac{1+q}{4} \right) t^2& \text{if } q \equiv 3 \pmod{4}
\end{cases}
\]

One can check that when $q \equiv 3 \pmod{4}$, the Alexander polynomial has no real roots and thus $\pi_1(S^3 \setminus K_q)$ is not bi-orderable by Theorem \ref{two_bridge_theorem}.  

On the other hand when  $q  \equiv 1 \pmod{4}$ both of the roots of the Alexander polynomial are real and positive, we will prove bi-orderability by applying Theorems \ref{CGW_theorem}(2) and \ref{CGW_theorem}(3).  First note that $\frac{q+1}{2} = -2\left( \frac{1-q}{4} \right)+1$, so $\frac{q+1}{2}$ and $\frac{1-q}{4}$ are relatively prime when $q >5$.  Thus when $q>5$ the coefficients of the Alexander polynomial satisfy the necessary divisibility conditions to apply Theorem \ref{CGW_theorem}(3), when $q=5$ we will apply Theorem  \ref{CGW_theorem}(2).  To finish the proof we show that $\pi_1(S^3 \setminus K_q)$ admits a presentation with two generators and a single principal relator which is also monic when $q=5$.  We begin with the standard two-bridge presentation $\pi_1(S^3 \setminus K_{\frac{p}{q}}) = \langle a, b | aw=wb  \rangle  $ and first determine the values of $\epsilon_i$ appearing in the formula for $w$.

For $i$ satisfying $1 \leq i \leq \frac{q-1}{2}$ we have $i > \frac{i(q-2)}{q} > i-1$ and so $\lfloor \frac{i(q-2)}{q} \rfloor = i-1$, whereas for $i$ satisfying $\frac{q+1}{2} <i < q-1$ we have $i-1 > \frac{i(q-2)}{q} > i-2$ and thus $\lfloor \frac{i(q-2)}{q} \rfloor = i-2$. This allows us to compute that 
\[ w = (\underbrace{ba^{-1}ba^{-1} \ldots ba^{-1}b}_{\frac{q-3}{2} \mbox{ letters }})(a^{-1}b^{-1})(\underbrace{ab^{-1} ab^{-1} \ldots ab^{-1}a}_{\frac{q-3}{2} \mbox{ letters }})
\]
Note that since $q  \equiv 1 \pmod{4}$ the segments of $\frac{q-3}{2}$ letters are of odd length (as we have written).

As in the proof of Lemma \ref{twobridge_lemma}, we will apply the homomorphism $\phi: F(a,b) \rightarrow F(x,y)$  with $\phi(a) =xy$, $\phi(b) =y$ to create a new presentation $\langle x, y | R \rangle$ of $\pi_1(S^3 \setminus K_q)$ with relator $R = xy \phi(w) y^{-1} \phi(w)^{-1}$. From the calculations above we find
\[\phi(w)=x^{-\frac{q-1}{4}}y^{-1}x^{\frac{q-1}{4}}y\]
and thus 
\[R=xyx^{-\frac{q-1}{4}}y^{-1}x^{\frac{q-1}{4}}y^{-1}x^{-\frac{q-1}{4}}yx^{\frac{q-1}{4}}\]
We replace $R$ with the relator $y^{-1} R y$ to get
\[ y^{-1}R y = (y^{-1}xy)(x^{-\frac{q-1}{4}})(y^{-1}x^{\frac{q-1}{4}}y)(y^{-2}x^{-\frac{q-1}{4}}y^2)(y^{-1}x^{\frac{q-1}{4}}y)
\]
from which we read off the sets \(\tau_j\) for \(j=0,1,2\), finding
\[\tau_0(y^{-1}R y)=\{2\},\tau_1(y^{-1}R y)=\{1,3,5\}, \tau_2(y^{-1}R y)=\{4\}\]
and $S_{y^{-1}Ry} = \{ 0, 1, 2\}$.
Since $|\tau_2(y^{-1}R y)| =1$, the relator $y^{-1} R y$ is principal.   Note that $\tau_2(y^{-1}R y)=\{4\}$ and $m_4 = \frac{q-1}{4}$, which is equal to one when $q=5$.  Thus the relator is also monic when $q=5$.
\end{proof}

\section{Computational results and knots with bridge number greater than two}

In order to apply Theorem \ref{CGW_theorem} to a group $G = \langle a, b | w \rangle$ one must find a presentation of $G$ where the relator $w$ has the form
\[ w = b^{m_1 a^{d_1}} \cdots b^{m_r a^{d_r}}
\]
Finding a presentation with a relator of this form is always possible when the group admits two generators and one relator, though there are possibly many different ways of doing so.  The key to our method is contained in the following lemma.

\begin{lemma} \cite[Chapter V, Lemma 11.8]{LS77}
Suppose that  $ G = \langle a, b | w \rangle$ and denote the weight of $a$ and $b$ in $w$ by $w_a, w_b$ respectively.  Assume without loss of generality that $0<w_a \leq w_b$.  Set  \[k =- \left\lfloor \cfrac{w_b}{w_a} \right\rfloor, \phi(a)=xy^k \mbox{ and } \phi(b)=y. \]
This defines an isomorphism $\phi: \langle a, b | w \rangle \rightarrow \langle x, y | \phi(w) \rangle$, moreover the weights $\phi(w)_x$ and $\phi(w)_y$ of $x$ and $y$ in $\phi(w)$ satisfy
\[ |\phi(w)_x| + |\phi(w)_y| < |w_a| + |w_b|
\]
\end{lemma}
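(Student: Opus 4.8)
The plan is to prove the statement in two independent pieces: first that $\phi$ induces an isomorphism of the presented groups, and second that the substitution strictly decreases the quantity $|w_a| + |w_b|$. For the isomorphism, the key observation is that $\phi$ is really a Nielsen (Tietze) transformation on the free group $F(a,b)$. I would define the candidate inverse $\psi : F(x,y) \to F(a,b)$ by $\psi(x) = ab^{-k}$ and $\psi(y) = b$, and then check directly on generators that $\psi\phi = \mathrm{id}$ and $\phi\psi = \mathrm{id}$, so that $\phi : F(a,b) \to F(x,y)$ is an isomorphism of free groups. An isomorphism of free groups carries the normal closure of $w$ onto the normal closure of $\phi(w)$, and hence descends to the claimed isomorphism $\langle a,b \mid w\rangle \to \langle x,y \mid \phi(w)\rangle$ of quotients.

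For the inequality I would exploit that taking the weight (exponent sum) of a generator is a homomorphism to $\mathbb{Z}$, so the weights of $\phi(w)$ are determined by those of $w$ through the linear map that $\phi$ induces on abelianizations. Since $\phi(a) = xy^k$ and $\phi(b) = y$, every occurrence of $a^{\pm 1}$ in $w$ contributes $\pm 1$ to the $x$-weight and $\pm k$ to the $y$-weight of $\phi(w)$, while every occurrence of $b^{\pm 1}$ contributes only $\pm 1$ to the $y$-weight. Summing over all letters gives
\[ \phi(w)_x = w_a \qquad\text{and}\qquad \phi(w)_y = w_b + k\, w_a. \]

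It then remains to estimate $|w_a| + |w_b + k\,w_a|$. Here I would substitute $k = -\lfloor w_b/w_a\rfloor$ and recognize $w_b + k\,w_a = w_b - \lfloor w_b/w_a\rfloor w_a$ as precisely the remainder $r$ of $w_b$ upon division by $w_a$, so that $0 \le r < w_a$ by the division algorithm. Using $w_a > 0$ one gets $|\phi(w)_x| + |\phi(w)_y| = w_a + r$, and since $r < w_a \le w_b$ forces $r < w_b$, we conclude $w_a + r < w_a + w_b = |w_a| + |w_b|$, as desired. The argument is essentially mechanical once the weight computation is set up; the only point requiring care is the bookkeeping around the floor function and the signs, namely verifying that the normalization $0 < w_a \le w_b$ really does guarantee $0 \le r < w_a$ and hence the strict inequality $r < w_b$ (including the boundary case $w_a = w_b$, where $r = 0 < w_a$ still gives a strict drop).
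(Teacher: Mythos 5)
Your proof is correct and follows essentially the same route as the paper's: the same inverse substitution $x \mapsto ab^{-k}$, $y \mapsto b$ establishing the isomorphism, the same weight computation $\phi(w)_x = w_a$, $\phi(w)_y = w_b + kw_a$, and the same division-algorithm bound giving the strict decrease. Your identification of $w_b + kw_a$ as the remainder $0 \le r < w_a$ is just a slightly more explicit phrasing of the paper's bound $0 \le w_b + kw_a < w_b$.
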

\begin{proof}
The map $\phi$ is an isomorphism since it has inverse $\phi^{-1}(x) = ab^{-k}$, $\phi^{-1}(y) = b$.  Note that the weights $\phi(w)_x$ and $\phi(w)_y$ satisfy $\phi(w)_x = w_a$ and $\phi(w)_y = w_b+kw_a$.  In particular, there are bounds $0 \leq w_b+kw_a < w_b$ so that 
\[ |\phi(w)_x| + |\phi(w)_y|  =|\phi(w)_x| + |w_b+kw_a|  < |w_a| + |w_b|
\]
as claimed.
\end{proof}

The conclusion $ |\phi(w)_x| + |\phi(w)_y| < |w_a| + |w_b|$ means that upon iteratively applying this lemma to a group presentation (with appropriate variable changes made at each step to guarantee that the hypothesis $0<w_a \leq w_b$ is satisfied by the input) one will eventually produce a two generator one relator presentation of the group $G$ such that one of the generators has weight zero in the relator.  Supposing that the generator $a$ has weight zero, it is then possible to rewrite the relator in the form:
\[ w = b^{m_1 a^{d_1}} \cdots b^{m_r a^{d_r}}
\]
Pseudocode for the procedure is found in Appendix A.  The results of applying this algorithm to all knots with 12 or fewer crossings and checking for tidy, principal or monic resulting words are in Appendix B.

\begin{remark}
Note that in the lemma above, we could also have used the substitution  $\phi(a)=xy^k$ and $\phi(b)=xyx^{-1}$, with $k$ as above.  Therefore one can also iterate the substitution $\phi(a)=xy^k$ and $\phi(b)=xyx^{-1}$ in order to find a relator having weight zero in one of the generators, although in practice we found that this yielded few new results.  Indeed, SnapPy gives a presentation for the knot $9_{16}$ which has one non-tidy relator which becomes tidy upon iterating the substitution $\phi(a)=xy^k$ and $\phi(b)=xyx^{-1}$, but it does not become tidy by iterating $\phi(a)=xy^k$ and $\phi(b)=y$.  Of all knots with twelve or fewer crossings, this was the only instance where one algorithm yielded a different result than the other.
\end{remark}

\begin{example}
The knot $10_{52}$ is non-fibered, so none of the theorems available from \cite{PR03, PR06, LRR08, CR12} apply.  Moreover it has bridge index $3$, so Theorem \ref{two_bridge_theorem} does not apply either; it is the first knot in the tables for which the algorithm in Appendix A succeeds where these theorems fail. SnapPy gives the group presentation $\pi_1(S^3 \setminus 10_{52}) = \langle a, b | w \rangle$ where 
\[
\begin{split}
&\quad
w=ab^2a^2b^2aBab^2a^2bAB^2A^2B^2AbAB^2A^2Bab^2a^2b^2aBab^2a^2bAB^2A^2B^2Aba^2b^2 \hspace{10em}  \\
&\quad \hspace{9em} aBab^2a^2b^2aBA^2B^2AbAB^2A^2B^2Aba^2b^2aBab^2a^2b^2aBA^2B^2AbAB^2A^2B
\end{split}
\]
Here, we write capital letters in place of inverses in order to simplify notation.  Note that in the above word, neither generator has weight zero: we find $w_a = 3$ and $w_b=3$.  Applying the algorithm described in Appendix A yields the substitution $a \mapsto bA^3$ and $b \mapsto a^4B$, giving a new presentation of the form $ \langle a, b | w' \rangle$
where 
\[
\begin{split}
&\quad
w'=baBabaBabA^4baBabaBAbABAbABa^4BAbABAbaBabaBabA^4baBabaBAbABAbABab
 \hspace{10em}  \\
&\quad \hspace{1em} aBabA^4baBabaBabABAbABa^4BAbABAbABabaBabA^4baBabaBabABAbABa^4BAbABA \\
\end{split}
\]
The generator $a$ now has weight zero in $w'$.  After writing $w'$ in the form 
\[ w' = b^{m_1 a^{d_1}} \cdots b^{m_r a^{d_r}}
\]
and conjugating so that $\mathrm{min} \{ d_{1}, \ldots ,d_{r} \} = 0$, we find
\[
\begin{split}
&\quad
w'= b^{a^{4}}b^{-a^{3}}b^{a^{2}}b^{-a}bb^{a^{4}}b^{-a^{3}}b^{a^{2}}b^{-a}
b^{a^{2}}b^{-a^{3}}b^{a^{4}}b^{-a^{5}}b^{-a}b^{a^{2}}b^{-a^{3}}b^{a^{4}}b^{-a^{3}}
b^{a^{2}}b^{-a}bb^{a^{4}}b^{-a^{3}}b^{a^{2}}b^{-a}
 \hspace{10em}  \\
&\quad \hspace{2.5em} b^{a^{2}}b^{-a^{3}}
b^{a^{4}}b^{-a^{5}}b^{a^{4}}b^{-a^{3}}b^{a^{2}}b^{a^{6}}b^{-a^{5}}b^{a^{4}}b^{-a^{3}}
b^{a^{2}}b^{-a^{3}}b^{a^{4}}b^{-a^{5}}b^{-a}b^{a^{2}}b^{-a^{3}}b^{a^{4}}b^{-a^{5}}b^{-a^{3}}b^{a^{2}}
b^{a^{4}}\\
&\quad \hspace{2.5em}
b^{a^{6}}b^{-a^{5}}b^{a^{4}}b^{-a^{3}}b^{a^{2}}b^{-a^{3}}
b^{a^{4}}b^{-a^{5}}b^{-a}b^{a^{2}}b^{-a^{3}}
\end{split}
\]
One can easily check by hand from this expression that $\min S_{w'} = 0$ and $\max S_{w'} = 6$, and clearly $\tau_j$ is empty for $j>6$ and $j<0$.  Therefore the word $w'$ is tidy.
The Alexander polynomial of $10_{52}$ is 
\[ \Delta(t) = 2-7t+ 13t^2-15t^3+ 13t^4-7t^5+ 2t^6
\]
and one can numerically verify that all of its roots are complex.  Thus, by Theorem \ref{CGW_theorem}(1) the knot $10_{52}$ does not have a bi-orderable knot group. 
\end{example}

\appendix
\section{Finding a presentation having a relator with zero weight} 

\begin{algorithmic}
\Function{Zero\_weight}{String $w$}
  \State int $w_a$;
  \State int $w_b$;
  \While{weight of $a$ in $w$ is not 0} 
    \State $w_a$ = weight of $a$ in $w$;
    \State $w_b$ = weight of $b$ in $w$;
    \If{$w_a < 0$} 
      \State swap $a$'s and $a^{-1}$'s;
      \State $w_a = -w_a$;
     \EndIf
    \If{$w_b < 0$}
      \State swap $b$'s and $b^{-1}$'s;
      \State $w_b = -w_b$;
    \EndIf
    \If{$w_a > w_b$}
      \State swap $a$'s and $b$'s in $w$;
      \State swap $a^{-1}$'s and $b^{-1}$'s in $w$;
      \State swap $w_a$ and $w_b$;
    \EndIf
    \If{weight of $a$ in $w$ is not 0}
      \State replace $a$'s with $ab^{- \lfloor w_b/w_a \rfloor}$;
      \State replace $a^{-1}$ with $b^{\lfloor w_b/w_a \rfloor}a^{-1}$;
    \EndIf
  \EndWhile

%  \\
%  \State int max := $max \{ d_{1} ... d_r \} $;
%  \State int epsilon := -1;
%  \State int delta := -1;
%  \State int maxj := -1;
%  \State int sum;
%  \State int tauSize;
%  \State int sizeDelta = -1;
%  \State boolean isTidy = true;
%  \State boolean isPrinipal = false;
%  \State boolean isMonic = false;
%  \\
%  \For{int j=0; $j \leq max$; j++}
%    \State sum =0;
%    \State tauSize = 0;
%    \For{int i=1; $i \leq r$; i+}
%      \If{$d_{i} = j$}
%        \State sum = sum + $m_i$;
%        \State tauSize++;
%      \EndIf
%    \EndFor
%    \If{tauSize $\neq$ 0}
%      \If{sum $\neq$ 0 }
%        \State delta = j;
%        \State sizeDelta = tauSize;
%        \If{epsilon = -1} 
%          \State epsilon = j;
%        \EndIf
%      \Else
%        \State maxj = j;
%        \If{epsilon = -1}
%          \State isTidy = false;
%        \EndIf
%      \EndIf
%    \EndIf
%  \EndFor
%  \If{epsilon = -1}
%    \State isTidy = false;
%  \EndIf
%  \If{maxj $>$ delta}
%    \State isTidy = false;
%  \EndIf
%  \\
%  \If{isTidy}
%    \If{sizeDelta = 1}
%      \State isPrincipal = true;
%      \If {$d_{1}$ = delta}
%        \State isMonic = true;
%      \EndIf
%    \EndIf
%  \EndIf

\EndFunction
\end{algorithmic}
Now that the weight of $a$ in $w$ is zero, $w$ can be written as $b^{m_{1}a^{d_{1}}} \cdots b^{m_{r}a^{d_{r}}}$.
Additionally we normalize by conjugating $w$ by $a$ until $\mathrm{min} \{ d_{1}, \ldots ,d_{r} \} = 0$. Then it is possible to check if the relator is tidy, principal or monic by directly applying the definitions from the paragraph preceding Theorem \ref{CGW_theorem}.

\section{Bi-orderable and non-bi-orderable knot groups}

All group presentations were initially calculated using SnapPy by Nathan Dunfield, the presentations were then changed using the algorithm of Appendix A in order to apply Theorems \ref{CGW_theorem}(1) and \ref{CGW_theorem}(2).
\subsection{Fibered knots}
For fibered knots, our program found the same results as \cite{CR12}, with two exceptions which we believe came about due to rounding error in the numerical method used in \cite{CR12} to solve for the roots of the Alexander polynomial.  The two exceptions are:
\begin{enumerate}
\item In that paper, the knot $12n_{0019}$ is listed having a non-bi-orderable group, when in fact bi-orderability of its group cannot be determined by known theorems since its Alexander polynomial has both positive and negative real roots (and it is not a `special' polynomial, as defined in \cite{LRR08}).
\item The knot $12a_{0477}$ is fibered, and its Alexander polynomial is $\Delta(t) =1-11t+ 41t^2-63t^3+ 41t^4-11t^5+ t^6$, which has all real positive roots.  Therefore it has a bi-orderable knot group by the main theorem of \cite{CR12}, though it is listed as non-bi-orderable there.
\end{enumerate}

\subsection{Non-fibered knots}

The following $5$ nonfibered knots have bi-orderable groups, by applying either Theorem \ref{CGW_theorem}(2) or Theorem \ref{CGW_theorem}(3):
$6_{1}, \ 8_{1}, \ 10_{1}, \ 10_{13}, \ 12a_{803}$.

The following $79$ nonfibered knots are two-bridge and their Alexander polynomials have no positive real roots, and so their groups are not bi-orderable:
$5_{2}$, $\ 7_{2}$, $\ 7_{3}$, $\ 7_{4}$, $\ 7_{5}$, $\ 8_{8}$, $\ 8_{13}$, $\ 9_{2}$, $\ 9_{3}$, $\ 9_{4}$, $\ 9_{5}$, $\ 9_{6}$, $\ 9_{7}$, $\ 9_{9}$, $\ 9_{10}$, $\ 9_{13}$, $\ 9_{14}$, $\ 9_{18}$, $\ 9_{19}$, $\ 9_{23}$, $\ 10_{10}$, $\ 10_{12}$, $\ 10_{15}$, $\ 10_{19}$, $\ 10_{23}$, $\ 10_{27}$, $\ 10_{28}$, $\ 10_{31}$, $\ 10_{33}$, $\ 10_{34}$, $\ 10_{37}$, $\ 10_{40}$, $\ 11a_{13}$, $\ 11a_{75}$, $\ 11a_{77}$, $\ 11a_{85}$, $\ 11a_{89}$, $\ 11a_{90}$, $\ 11a_{95}$, $\ 11a_{98}$, $\ 11a_{111}$, $\ 11a_{119}$, $\ 11a_{178}$, $\ 11a_{183}$, $\ 11a_{186}$, $\ 11a_{188}$, $\ 11a_{191}$, $\ 11a_{192}$, $\ 11a_{193}$, $\ 11a_{195}$, $\ 11a_{205}$, $\ 11a_{210}$, $\ 11a_{234}$, $\ 11a_{235}$, $\ 11a_{236}$, $\ 11a_{238}$, $\ 11a_{242}$, $\ 11a_{243}$, $\ 11a_{246}$, $\ 11a_{247}$, $\ 11a_{307}$, $\ 11a_{333}$, $\ 11a_{334}$, $\ 11a_{335}$, $\ 11a_{336}$, $\ 11a_{337}$, $\ 11a_{339}$, $\ 11a_{341}$, $\ 11a_{342}$, $\ 11a_{343}$, $\ 11a_{355}$, $\ 11a_{356}$, $\ 11a_{357},\ 11a_{358}$, $\ 11a_{359}$, $\ 11a_{360}$, $\ 11a_{363}$, $\ 11a_{364}$, $\ 11a_{365}$.
 
The following $15$ knots have bridge index $3$, admit a two-generator presentation with a single tidy relator, and their Alexander polynomials have no positive real roots. Therefore their groups are not bi-orderable:
$9_{16}, \ 10_{52}, \ 10_{57}, \ 10_{128}, \ 10_{129}$, $\ 10_{130}, \ 10_{134}, \ 10_{135},\ 11a_{12}, \ 11a_{32}$, $\ 11a_{46}, \ 11a_{241}, \ 11a_{258}, \ 11n_{18}, \ 11n_{62}$.

The following $92$ knots admit a two-generator presentation with a single tidy relator and their Alexander polynomials have no positive real roots, so their groups are not bi-orderable (we do not know their bridge index, as that information is not listed on \textit{Knotinfo} for knots with twelve or more crossings):
$12a_{9}$, $\ 12a_{31}$, $\ 12a_{32}$, $\ 12a_{42}$, $\ 12a_{81}$, $\ 12a_{96}$, $\ 12a_{143}$, $\ 12a_{147}$, $\ 12a_{148}$, $\ 12a_{151}$, $\ 12a_{169}$, $\ 12a_{212}$, $\ 12a_{235}$, $\ 12a_{241}$, $\ 12a_{247}$, $\ 12a_{251}$, $\ 12a_{302}$, $\ 12a_{378}$, $\ 12a_{379}$, $\ 12a_{424}$, $\ 12a_{511}$, $\ 12a_{514}$, $\ 12a_{534}$, $\ 12a_{537}$, $\ 12a_{580}$, $\ 12a_{581}$, $\ 12a_{582}$, $\ 12a_{595}$, $\ 12a_{596}$, $\ 12a_{643}$, $\ 12a_{669}$, $\ 12a_{718}$, $\ 12a_{720}$, $\ 12a_{728}$, $\ 12a_{732}$, $\ 12a_{744}$, $\ 12a_{759}$, $\ 12a_{760}$, $\ 12a_{761}$, $\ 12a_{774}$, $\ 12a_{791}$, $\ 12a_{792}$, $\ 12a_{826}$, $\ 12a_{827}$, $\ 12a_{836}$, $\ 12a_{876}$, $\ 12a_{879}$, $\ 12a_{880}$, $\ 12a_{882}$, $\ 12a_{1029}$, $\ 12a_{1030}$, $\ 12a_{1033}$, $\ 12a_{1034}$, $\ 12a_{1129}$, $\ 12a_{1130}$, $\ 12a_{1132}$, $\ 12a_{1133}$, $\ 12a_{1138}$, $\ 12a_{1139}$, $\ 12n_{46}$, $\ 12n_{78}$, $\ 12n_{153}$, $\ 12n_{154}$, $\ 12n_{166}$, $\ 12n_{167}$, $\ 12n_{169}$, $\ 12n_{170}$, $\ 12n_{200}$, $\ 12n_{236}$, $\ 12n_{239}$, $\ 12n_{241}$, $\ 12n_{243}$, $\ 12n_{244}$, $\ 12n_{248}$, $\ 12n_{250}$, $\ 12n_{251}$, $\ 12n_{288}$, $\ 12n_{289}$, $\ 12n_{305}$, $\ 12n_{307}$, $\ 12n_{308}$, $\ 12n_{310}$, $\ 12n_{374}$, $\ 12n_{404}$, $\ 12n_{501}$, $\ 12n_{502}$, $\ 12n_{503}$, $\ 12n_{575}$, $\ 12n_{594}$, $\ 12n_{650}$, $\ 12n_{723}$, $\ 12n_{851}$.

\bibliographystyle{plain}

\bibliography{ordbook}

\end{document}